\documentclass[11pt]{article}
\date{}

\usepackage[title]{appendix}
\usepackage{xcolor}
\usepackage[margin=1in]{geometry}                % See geometry.pdf to learn the layout options. There are lots.
\usepackage{graphicx}
\usepackage{subcaption}
\usepackage{pdflscape}
\usepackage{amssymb}
\usepackage[normalem]{ulem}
\usepackage{hyperref}
\usepackage{enumitem}
\usepackage{mathrsfs}
\usepackage{epstopdf}
\usepackage{rotating}
\usepackage{longtable} % for 'longtable' environment
\usepackage{adjustbox}
\usepackage{float}

\usepackage{color}
\usepackage{bbm, dsfont}
\usepackage{pst-node}
\usepackage{tikz-cd}
\usepackage{amsfonts} %mathbb{}
\usepackage{geometry}
\usepackage{amsthm}
\usepackage{amsmath}
\usepackage{titlesec}
\usepackage{amssymb}
\usepackage{enumitem}
\usepackage{float}
\usepackage [english]{babel}
\usepackage [autostyle, english = american]{csquotes}
\usepackage{algorithm}
\usepackage[noend]{algpseudocode} %pseudocode
\makeatletter
\def\BState{\State\hskip-\ALG@thistlm}
\makeatother
\usepackage{hyperref}
\usepackage[normalem]{ulem}
\usepackage{mathrsfs}
\usepackage[italicdiff]{physics}

\newlist{casess}{enumerate}{1}
\setlist[casess]{label=     \textbf{Case} \arabic*:}
\usepackage{mathtools}

\makeatletter
\newcommand*{\rom}[1]{\expandafter\@slowromancap\romannumeral #1@}
\makeatother

\usepackage{etoolbox}

\makeatletter
\patchcmd{\ttlh@hang}{\parindent\z@}{\parindent\z@\leavevmode}{}{}
\patchcmd{\ttlh@hang}{\noindent}{}{}{}
\makeatother

\usepackage{listings}
\usepackage{color} %red, green, blue, yellow, cyan, magenta, black, white
\definecolor{mygreen}{RGB}{28,172,0} % color values Red, Green, Blue
\definecolor{mylilas}{RGB}{170,55,241}

\newlist{Assumptions}{enumerate}{1}
\setlist[Assumptions]{label=     \textbf{Assumption} \arabic*:}

\makeatletter

\newsavebox{\@brx}
\newcommand{\llangle}[1][]{\savebox{\@brx}{\(\m@th{#1\langle}\)}%
  \mathopen{\copy\@brx\kern-0.5\wd\@brx\usebox{\@brx}}}
\newcommand{\rrangle}[1][]{\savebox{\@brx}{\(\m@th{#1\rangle}\)}%
  \mathclose{\copy\@brx\kern-0.5\wd\@brx\usebox{\@brx}}}
\makeatother

\usepackage{lipsum} % for generating filler text
\usepackage{titlesec}
\titleformat{\subsection}[runin]% runin puts it in the same paragraph
       {\normalfont\bfseries}% formatting commands to apply to the whole heading
       {\thesubsection}% the label and number
       {0.5em}% space between label/number and subsection title
       {}% formatting commands applied just to subsection title
       [.]% punctuation or other commands following subsection title

 \newtheorem{thm}{Theorem}[section]

 \newtheorem{lem}[thm]{Lemma}
 
 \theoremstyle{definition}
 \newtheorem{defn}[thm]{Definition}
 \theoremstyle{remark}

 \numberwithin{equation}{section}

\numberwithin{equation}{section}

\def\N{\mathbb{N}}

\DeclarePairedDelimiterX{\inp}[2]{\langle}{\rangle}{#1, #2}

\makeatletter
\newcommand*\bigcdot{\mathpalette\bigcdot@{.5}}
\newcommand*\bigcdot@[2]{\mathbin{\vcenter{\hbox{\scalebox{#2}{$\m@th#1\bullet$}}}}}
\makeatother

\def\<{\langle}
\def\>{\rangle}

\numberwithin{equation}{section}

\usepackage[backend=biber,maxnames=10]{biblatex}
\begin{document}

\title{Mixed-identity-freeness and primitivity of group rings}

\author{Felipe I. Flores
\footnote{
\textbf{2020 Mathematics Subject Classification:} Primary 16S34, Secondary 20C07, 20F67, 37D40.
\newline
\textbf{Key Words:} Primitive, group ring, mixed-identity-free, extremely proximal action, topologically free.}
}

\maketitle

\begin{abstract}\setlength{\parindent}{0pt}\setlength{\parskip}{1ex}\noindent
We show that every countable group $G$ that is mixed-identity-free (MIF) and contains a non-abelian free subgroup has the following property: the group ring $KG$ is primitive for any field $K$. We also present a purely dynamical criterion that implies this result.  

Our criterion recovers several of the existing results on primitivity, including those involving acylindrically hyperbolic groups. Furthermore, our criterion also applies (positively) to a plethora of new examples, such as Thompson-like groups, commensurator groups of hyperbolic groups, some Kac-Moody groups, and many more.

\end{abstract}

%%%%%%%%%%%%%%%%%%%%%%%%%%%%%%%%%%%%%%%%%%%%%%%%%%%%%%%%%%%%%%%%%%%%%%%%%%%%%%%%%%%%%%
\section{Introduction}
%%%%%%%%%%%%%%%%%%%%%%%%%%%%%%%%%%%%%%%%%%%%%%%%%%%%%%%%%%%%%%%%%%%%%%%%%%%%%%%%%%%%%%

A ring $R$ is said to be (right) primitive if it has a faithful irreducible (right) $R$-module. For many years, it was not known whether there exists a group $G$ and a ring $R$ such that the group ring $RG$ is primitive \cite[Problem 17]{Ka70}. However, Formanek and Snider provided the first example of such a group in \cite{FoSn72}, and they even showed that any group can be embedded in another group $G$ such that $KG$ is primitive for some field $K$.

Since then, the study of primitivity in group rings has been an active area of research for several decades. A major milestone was achieved in 1978 through the work of Domanov \cite{Do78}, Farkas–Passman \cite{FaPa78}, and Roseblade \cite{Ro78,Ro79}, who established a complete characterization of the primitivity of group algebras of polycyclic-by-finite groups. More concretely, they proved that, for a polycyclic-by-finite group $G$, the group ring $KG$ is primitive if and only if the FC-center of $G$ is trivial and the field $K$ is not absolute.

In the setting of non-noetherian groups, we can mention the results on non-elementary free products by Formanek \cite{Fo73}, the results on amalgamated free products by Balogun \cite{Bo89}, and those by Alexander-Nishinaka \cite{AlNi17}. Very recently, using geometric methods, Solie established the primitivity of all group rings $KG$ associated with torsion-free (non-elementary) hyperbolic groups \cite{So18}, and his result was extended to all (non-elementary) acylindrically hyperbolic groups without finite normal subgroups by Abbott-Dahmani \cite{AbDa19}.

Other interesting results on the primitivity of group rings can be found in the works of Nishinaka \cite{Ni07,Ni11}.

As we mentioned in the abstract, the goal of this article is to introduce a new algebraic criterion that implies the primitivity of group rings. This will be achieved using the concept of mixed-identity-free groups. We introduce the appropriate definition now.

\begin{defn}\label{MIF}
    A group $G$ is mixed-identity-free (MIF) if, for every $n\in \N$ and every $1\neq w\in G*\mathbb F_n$, there is a homomorphism $G*\mathbb F_n\to G$, restricting to the identity on $G$, that does not send $w$ to $1$.
\end{defn}

The main result of this article is the following:

\begin{thm}\label{mainthm}
   Let $G$ be a MIF group that has a non-abelian free subgroup whose cardinality is the same as that of $G$. Then, if $R$ is a domain with $|R| \leq |G|$, the group ring $RG$ of $G$ over $R$ is primitive. Moreover, the group algebra $KG$ is primitive for any field $K$.
\end{thm}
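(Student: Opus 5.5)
The plan is to use the standard combinatorial criterion for primitivity due to Formanek, in the form later refined by Balogun, Alexander–Nishinaka, Solie and Abbott–Dahmani. Roughly, $RG$ is primitive, for $R$ a domain with $|R|\le|G|$, if $G$ has trivial FC-center (or is ICC) and satisfies the following: for every finite subset $M\subseteq G\setminus\{1\}$ there are elements $a,b,c\in G$ with $(g_1 x_1)(g_2x_2)\cdots(g_m x_m)\neq 1$ for all $g_i\in M$ and $x_i\in\{a,b,c\}$ (or a similar "free-like" condition on finitely many words). Formanek's argument enumerates the nonzero elements of $RG$; this enumeration has cardinality $|G|$ since $|R|\le|G|$. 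For each such element it builds a right ideal avoiding $1$ that is comaximal with the ideal generated by the element, and then passes to a maximal right ideal to get a faithful simple module. So the proof reduces to verifying a condition of this type for $G$. The field statement follows once the domain case is proved, after using a standard transfer: primitivity of $K_0G$ for a countable subfield $K_0$ or the prime field, extended along field extensions, or by checking that Formanek's construction is insensitive to $|K|$ once the group-theoretic condition holds for subsets of size $|K|$.

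The key step is therefore to derive the combinatorial condition from MIF. ICC is immediate: if $g\neq1$ had finitely many conjugates, some power-related mixed identity would hold. Concretely, with $x$ a variable, the conjugates $xgx^{-1}$ range over finitely many $h_j$. Then $\prod_j [xgx^{-1}h_j^{-1}, y]$ type words, or more simply $\prod_j (xgx^{-1}h_j^{-1})$ placed inside a commutator trick, give a nontrivial mixed identity, which is a contradiction. For the main condition, fix finite $M$. Consider the word $w(x,y,z)$ given by a finite product, over all the (finitely many, up to length bounds) reduced patterns, of commutators of the words $g_1x_1\cdots g_mx_m$. Here the product runs over all $g_i\in M$ and $x_i\in\{x,y,z\}$, and any single element $w_j$ is handled by $[\cdot,\cdot]$-nesting as is usual in MIF arguments, so that one element of $G*\mathbb F_3$ is nontrivial. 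Any one such word is nontrivial in $G*\mathbb F_3$ as a normal form, since $g_i\neq1$. MIF applies to one word at a time, so I would use the standard lemma that in a MIF group, for finitely many nontrivial $w_1,\dots,w_k\in G*\mathbb F_n$, there is a single substitution making all $w_j\neq1$. This is proved by iterated commutators with fresh conjugating letters: $[w_1,u w_2 u^{-1}]$ is nontrivial in the free product. Infinitely many lengths $m$ are required, however, so a single substitution from finitely many words does not suffice.

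This is the main obstacle, and here the free subgroup enters. I would take a non-abelian free subgroup $F$ and use MIF to choose $a,b,c$ such that they generate a free subgroup interacting with $M$ in a ping-pong fashion. To get this, I would take $a,b,c$ to be high powers or conjugates of free generators of $F$ chosen by MIF so that finitely many "short" mixed words (of length at most some bound $N$ depending on $M$) are nontrivial. I would then upgrade from bounded length to all lengths by a small-cancellation/ping-pong-type argument. Alternatively, and more robustly, I would check whether the criterion can be reduced to a bounded-length version (Alexander–Nishinaka's condition involves only finitely many words per finite subset), which MIF gives directly; the cardinality assumption on the free subgroup is then used to carry out the transfinite Formanek enumeration inside $F$ when $G$ is uncountable. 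I expect getting exactly the right finitary form of the criterion to be the delicate point.
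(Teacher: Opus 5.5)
Your architecture is the paper's: reduce to the Alexander--Nishinaka criterion (Theorem \ref{AN}), then verify its combinatorial hypothesis with the lemma that a MIF group admits one substitution keeping finitely many nontrivial elements of $G*\mathbb F_n$ nontrivial. The gap is that you never pin down that hypothesis, and the obstacle you call the main one comes from misformulating it. The condition actually needed is property $(*)$: for each finite $M\subset G\setminus\{1\}$ \emph{and each} $m\ge 2$ there exist distinct $a,b,c$ such that $(x_1^{-1}g_1x_1)\cdots(x_m^{-1}g_mx_m)=1$, with $g_i\in M$ and $x_i\in\{a,b,c\}$, forces $x_i=x_{i+1}$ for some $i$. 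Because $a,b,c$ are chosen after $m$, only finitely many words are ever involved, so no passage from bounded to unbounded length is required. Your ping-pong/small-cancellation upgrade would not be available anyway: MIF gives no action or geometry, only finitely many nonvanishing conditions at a time. The shape of the words also matters. With conjugates the escape clause is essential: if $g,g^{-1}\in M$ then $a^{-1}ga\,a^{-1}g^{-1}a=1$. Your product form $(g_1x_1)\cdots(g_mx_m)$ is not the condition in Theorem \ref{AN}.

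With $(*)$ stated correctly, your own three-variable idea finishes the argument about as quickly as the paper's route. In $G*\langle x,y,z\rangle$, take $x_1^{-1}\,g_1\,(x_1x_2^{-1})\,g_2\cdots(x_{m-1}x_m^{-1})\,g_m\,x_m$ with $x_i\in\{x,y,z\}$ and $x_i\neq x_{i+1}$. This is an alternating reduced word, hence nontrivial. Adding $xy^{-1},yz^{-1},xz^{-1}$ gives a finite family, and one simultaneous substitution produces distinct $a,b,c$ witnessing $(*)$. The paper instead takes $a,b,c=u,u^2,u^3$ for a single $u$ from a one-variable big-powers lemma; this keeps the witnesses in a cyclic subgroup at the cost of some exponent bookkeeping.

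Two smaller points. First, your ICC word should use nested commutators with fresh conjugating letters, not a product, since a product does not vanish when one factor does. ICC need not be checked separately in any case, because Theorem \ref{AN} does not assume it. Second, the field case must not be obtained by extending primitivity along field extensions, which is not a valid general principle (e.g.\ $\mathbb C\otimes_{\mathbb R}\mathbb C\cong\mathbb C\times\mathbb C$). It is part of the conclusion of Theorem \ref{AN}, which is also where the cardinality hypothesis on the free subgroup is used.
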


The class of countable groups that are MIF and contain a non-abelian free subgroup is vast, and it includes:

\begin{enumerate}
    \item[(i)] all acylindrically hyperbolic groups without finite normal subgroups \cite{HuOs16},
    \item[(ii)] many linear groups \cite{AvGe25},
    \item[(iii)] the commensurator groups of non-elementary torsion-free hyperbolic groups \cite{BaFl26},
    \item[(iv)] many notable groups of homeomorphisms of the Cantor set \cite{BlElHy24}, such as the Higman–Thompson groups $G_{n,r}$, the Brin-Thompson groups $nV$, R\"over’s group $V(\Gamma)$,
    \item[(v)] all lim-free weakly hyperbolic groups \cite{Ry26}, such as the Burger-Mozes group, the Amir-Lazarovich groups, some Kac-Moody groups, and
    \item[(vi)] some other groups acting on trees \cite{BrIvOm20,FLMS}. 
\end{enumerate}

In particular, since non-elementary acylindrically hyperbolic groups without finite normal subgroups satisfy the hypotheses of our main theorem, we recover the primitivity result of Abbott-Dahmani \cite[Theorem 0.4]{AbDa19} with an independent proof. Furthermore, since the class of acylindrically hyperbolic groups contains all non-elementary hyperbolic groups and all non-elementary free products \cite{MiOs15,Os16}, the same is true for the main primitivity results of Solie \cite{So18} and Formanek \cite{Fo73}.

Furthermore, we note that a good number of the groups mentioned in the last paragraph were shown to be MIF by studying their dynamical properties. In light of this evidence, we also wish to propose a dynamical criterion for the primitivity of group rings. In this case, the relevant definitions are the following.

\begin{defn}\label{thedef}
Let $G$ be a group and let $X$ be a Hausdorff space with $\# X>2$. An action by homeomorphisms $G\curvearrowright X$ is said to be 
\begin{enumerate}
    \item an \emph{extremely proximal action} if, for every pair of non-empty open subsets $U,V\subseteq X$, there exists $g\in G$ such that $g(X\setminus U)\subseteq V$;
    \item \emph{topologically free} if, for every $g\in G\setminus\{1\}$, the set of points fixed by $g$ is nowhere dense.
\end{enumerate}
\end{defn}

With these definitions at hand, we now state our second criterion for primitivity. 

\begin{thm}\label{mainthm2}
   Let $G$ be a countable group that admits a topologically free, extremely proximal action $G\curvearrowright X$. Then, if $R$ is a countable domain, the group ring $RG$ is primitive. Moreover, the group ring $KG$ is primitive for any field $K$.
\end{thm}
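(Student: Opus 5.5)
The plan is to reduce Theorem \ref{mainthm2} to Theorem \ref{mainthm}, by showing that a countable group $G$ with a topologically free, extremely proximal action $G\curvearrowright X$ is MIF and contains a non-abelian free subgroup. Since $G$ is countable, any non-abelian free subgroup is countably infinite, so it has the same cardinality as $G$. A countable domain $R$ satisfies $|R|\leq |G|$. Hence Theorem \ref{mainthm} gives primitivity of $RG$ and of $KG$ for every field $K$.

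\textbf{Step 1: a free subgroup.} Extreme proximality gives a ping-pong configuration. Since $\#X>2$ and $X$ is Hausdorff, pick four pairwise disjoint non-empty open sets $U_1,U_2,V_1,V_2$; this needs $X$ to have at least four points, which I expect follows from extreme proximality together with $\#X>2$. Apply extreme proximality to obtain $a$ with $a(X\setminus U_1)\subseteq V_1$, which gives a hyperbolic-type element. Composing such elements, find $a$ mapping $X\setminus U_1$ into $V_1$ and $a^{-1}$ mapping $X\setminus V_1$ into $U_1$. This is the standard argument that extremely proximal actions contain free subgroups, as in Glasner. The ping-pong lemma then gives $\langle a,b\rangle\cong\mathbb F_2$.

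\textbf{Step 2: MIF (main obstacle).} Take $1\neq w\in G*\mathbb F_n$. Since $G*\mathbb F_n\hookrightarrow G*\mathbb Z$ via a free subgroup of $\mathbb Z*\mathbb Z$-type, we may assume $n=1$. Write
\[
w=g_0x^{\epsilon_1}g_1\cdots x^{\epsilon_k}g_k,
\]
reduced, so $g_i\neq 1$ whenever $\epsilon_i\epsilon_{i+1}>0$ fails appropriately. We need $h\in G$ with $w(h)\neq 1$.

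\begin{itemize}
\item Topological freeness makes each nontrivial $g_i$ move some non-empty open set off itself: choose $W_i$ with $g_iW_i\cap W_i=\emptyset$.
\item Use extreme proximality to choose $h$ contracting everything outside a small open set $U$ into a small open set $V$, and $h^{-1}$ contracting outside $V'$ into $U'$. These sets are chosen so that each $g_i$ moves $V$ off $U'$, etc., which is possible by shrinking using topological freeness and Hausdorffness.
\item A ping-pong tracking argument then shows that a point $p$ chosen in a suitable open set is moved: $w(h)p\in V$ while $p\notin V$, say.
\item Also conjugate $w$ first so that $g_0$ and $g_k$ are handled.
\end{itemize}

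The delicate part is arranging finitely many open sets consistent with all $g_i$ simultaneously. I would first try choosing a single point $z$ not fixed by any $g_i$ or its relevant products; this point exists because a finite union of nowhere dense closed fixed-point sets is nowhere dense. I would then take tiny neighborhoods around $z$ and its images.

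Alternatively, if an earlier result in the paper states that such actions yield MIF (e.g. a cited dynamical MIF criterion), I would invoke it directly.
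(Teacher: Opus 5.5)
Your plan is correct and is essentially the paper's: Theorem~\ref{mainthm2} is deduced from Theorem~\ref{mainthm} via Lemma~\ref{mainlemma}, which obtains the free subgroup from Glasner's ping-pong, reduces MIF to one-variable mixed identities (the paper cites Hull--Osin; your embedding $G*\mathbb F_n\hookrightarrow G*\langle t\rangle$, e.g.\ $x_i\mapsto t^igt^{-i}$ with $g\in G$ of infinite order, does the same job), and then runs the single-point ping-pong you describe. Concretely, the paper picks $x_0$ moved by every $g_i$ and a neighborhood $U\ni x_0$ with $g_iU\cap U=\emptyset$ for all $i$, and it places the attracting and repelling sets of $u$ inside $U\setminus\{x_0\}$, so that $x_0$ itself serves as your test point.
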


The point here is that a group $G$ admitting a topologically free, extremely proximal action $G\curvearrowright X$ as above is forced to be MIF. This applies to all the relevant subclasses of groups studied in \cite{AbDa19,BrIvOm20,FLMS,Yan25,BaFl26,Ry26}\footnote{Note that some, if not most, of these actions are not explicitly studied as extremely proximal actions, but rather as actions with `(weak) north-south dynamics.' The standard argument showing that they are extremely proximal can be found in \cite[Lemma 2.4]{BaFl26}.}. We provide a proof of this fact in Lemma \ref{mainlemma}. It is a generalization of similar results proven by Ozawa \cite{Oz25} and Rybak \cite{Ry26}.

Let us also mention that, due to their striking role in the theory of $C^*$-algebras \cite{Oz25,FKOCP26}, the study of extremely proximal actions is currently receiving a lot of attention, and it is likely that we will see new examples soon.

%%%%%%%%%%%%%%%%%%%%%%%%%%%%%%%%%%%%%%%%%%%%%%%%%%%%%%%%%%%%%%%%%%%%%%%%%%%%%%%%%%%%%%
\section{Proof of the main result}
%%%%%%%%%%%%%%%%%%%%%%%%%%%%%%%%%%%%%%%%%%%%%%%%%%%%%%%%%%%%%%%%%%%%%%%%%%%%%%%%%%%%%%

Before beginning the proof of Theorem \ref{mainthm}, we recall the following property introduced by Alexander and Nishinaka \cite{AlNi17}.

\begin{quote} $(*)$ For each finite subset $M\subset G\setminus\{1\}$ and for any positive integer $m \geq 2$, there exist distinct $a, b, c \in G$ such that if $(x^{-1}_1g_1x_1)(x^{-1}_2g_2x_2)\cdots(x^{-1}_m g_mx_m) = 1,$ where $g_i \in M$ and
$x_i \in \{a, b, c\}$ for all $i = 1, \ldots , m$, then $x_i = x_{i+1}$ for some $i$.
\end{quote}

\begin{thm}[Alexander-Nishinaka \cite{AlNi17}]\label{AN}
Let $G$ be a group which has a non-abelian free subgroup whose cardinality is the same as that of $G$, and suppose that $G$ satisfies property $(*)$. Then if $R$ is a domain with $|R| \leq |G|$, the group ring $RG$ of $G$ over $R$ is primitive. Moreover, the group algebra $KG$ is primitive for any field $K$.
\end{thm}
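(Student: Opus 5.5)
The plan is to follow Formanek's strategy for proving primitivity. The aim is a maximal right ideal $M$ of $RG$ whose core, the largest two-sided ideal contained in $M$, is zero; then $RG/M$ is a faithful irreducible right module. It suffices to build a proper right ideal $\rho$ with the following property: every nonzero two-sided ideal $I$ contains an element $\varepsilon_I$ with $1+\varepsilon_I\in\rho$. Indeed, if $\rho\subseteq M$ and $I\subseteq M$, then $1=(1+\varepsilon_I)-\varepsilon_I\in M$, a contradiction.

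Since $|R|\le|G|$, the set $RG\setminus\{0\}$ has cardinality at most $|G|$. I will well-order it as $\{\alpha_\lambda\}_{\lambda<\kappa}$ with $\kappa=|G|$. For each $\alpha_\lambda$ I will produce an explicit element
\[
\varepsilon_\lambda=\sum_j \alpha_\lambda\, u_{\lambda,j}\,\alpha_\lambda\, v_{\lambda,j}\in RG\alpha_\lambda RG ,
\]
and set $\rho=\sum_\lambda(1+\varepsilon_\lambda)RG$. The elements $u_{\lambda,j},v_{\lambda,j}$ will be words in the conjugating elements $a,b,c$ supplied by $(*)$ and in fresh free generators. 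I will apply $(*)$ with $M$ equal to the finite set $\operatorname{supp}(\alpha_\lambda)^{-1}\operatorname{supp}(\alpha_\lambda)\setminus\{1\}$, or a suitable enlargement of it. The fresh generators come from the non-abelian free subgroup $F\le G$ with $|F|=|G|$. This cardinality hypothesis lets me assign to each index $\lambda$ its own disjoint set of free generators, not used by any earlier index, via transfinite induction.

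The core of the argument is showing that $\rho$ is proper, that is,
\[
\sum_{k=1}^n(1+\varepsilon_{\lambda_k})\beta_k\neq 1
\]
for all finite choices of indices and of $\beta_k\in RG$. Here I will use a support and cancellation argument, and I expect this to be the main obstacle. I will expand the sum into group elements, and I will try first to show that the group elements of maximal "length" occurring in the expansion cannot cancel. Length will be measured with respect to the free generators and the alternating $a,b,c$ pattern.

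If two such top terms coincide, rearranging the coincidence produces a product of the form
\[
(x_1^{-1}g_1x_1)\cdots(x_m^{-1}g_mx_m)=1
\]
with each $g_i\in M$ and consecutive $x_i$ distinct. Property $(*)$ forbids exactly this, so the top terms survive. Because $R$ is a domain, their coefficients, which are products of nonzero coefficients of the $\alpha$'s and $\beta$'s, are nonzero, and the sum cannot equal $1$. The delicate part is the bookkeeping: one must arrange the words so that every potential cancellation really reduces to a relation of the forbidden shape, and the disjointness of the free generators across different $\lambda$ is what lets this reduction go through.

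For a field $K$ of cardinality larger than $|G|$, the cardinality bound on the ring fails, so I will instead reduce to a smaller field. Take the prime subfield $K_0$, which has $|K_0|\le|G|$. Then $K_0G$ is primitive, and I will pass from $K_0G$ to $KG=K\otimes_{K_0}K_0G$ by the standard argument. That argument can be run directly here, since the construction of $\rho$ above works coefficient-wise over any domain once the indexing issue is handled, for instance by enumerating supports rather than elements.
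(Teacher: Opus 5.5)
The paper gives no proof of this statement; it is quoted from \cite{AlNi17}, whose argument is of the Formanek type you describe. Your reduction is correct: a proper right ideal $\rho$ such that every nonzero ideal $I$ contains some $\varepsilon_I$ with $1+\varepsilon_I\in\rho$ gives primitivity. Your cardinality bookkeeping is also fine. But the step you flag as the main obstacle, $\sum_k(1+\varepsilon_{\lambda_k})\beta_k\neq 1$, is the entire content of the theorem, and your sketch has no mechanism that delivers it. Concretely:
(i) There is no length on $G$ ``with respect to the free generators''. The free subgroup $F$ is only a subgroup, not a free factor. Products mixing its generators with elements of $\operatorname{supp}(\alpha_\lambda)$, $\operatorname{supp}(\beta_k)$ and $a,b,c$ can collapse in $G$, and freeness of $F$ says nothing about them.
(ii) The $\beta_k$ are arbitrary. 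They may contain the very generators and the elements $a,b,c$ reserved for $\lambda_k$. So disjointness across indices does not stop the tail of $\varepsilon_{\lambda_k}$ from cancelling against the head of $\beta_k$, nor terms from different $k$ from coinciding.
(iii) Property $(*)$ is applied to a fixed finite $M$ and fixed $m$ before $a,b,c$ are chosen. A coincidence involving arbitrary $\beta_k$, or two indices $\lambda_k\neq\lambda_l$ with unrelated triples, a priori yields a relation of unbounded length. Such a relation is not of the form $(x_1^{-1}g_1x_1)\cdots(x_m^{-1}g_mx_m)=1$ with $g_i\in M$ and all $x_i$ taken from one triple.
Your sentence ``rearranging the coincidence produces a product of the forbidden form'' is exactly what must be proved. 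Without explicit $u_{\lambda,j},v_{\lambda,j}$ and an ordering that make it true, the argument is a plan rather than a proof.

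Your last paragraph is incorrect as written. Primitivity does not pass to field extensions: $K_0(t)$ is primitive, yet $K_0(t)\otimes_{K_0}K_0(t)$ is a commutative domain that is not a field, hence not primitive. The density argument transfers a faithful simple $K_0G$-module $V$ to $KG$ only when $\operatorname{End}_{K_0G}(V)=K_0$, and nothing in the construction guarantees that. Indexing by supports instead of elements fails outright. If $\alpha$ and $c\alpha$ ($c\in K^\times$) receive the same generators, then $\varepsilon_{c\alpha}=c^2\varepsilon_\alpha$. Hence $c^2(1+\varepsilon_\alpha)-(1+\varepsilon_{c\alpha})=c^2-1$ lies in $\rho$, and it is a unit as soon as $c^2\neq1$, which holds for some $c$ in every field with more than three elements. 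So the clause for fields with $|K|>|G|$ needs a genuinely different argument, and the proposal does not provide one.
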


Note that, in order to derive Theorem \ref{mainthm}, we only need to prove that MIF groups satisfy property $(*)$. The next lemma is well-known to experts (see, for example, the proof of \cite[Theorem 2.3]{Ch26}). 

\begin{lem}
    Let $G$ be mixed-identity-free. Given finitely many nontrivial elements $w_1,\ldots,w_m\in G*\mathbb F_n$, there exists $g_1,\ldots,g_n\in G$ such that $w_j(g_1,\ldots,g_n)\not=1$, for all $1\leq j\leq m$.
\end{lem}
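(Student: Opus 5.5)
The plan is to reduce the simultaneous non-vanishing of $w_1,\ldots,w_m$ to the non-vanishing of a single word, and then apply Definition \ref{MIF} to that word. The standard device is an iterated commutator. We use extra free variables to conjugate the words into ``independent positions,'' so that the commutator cannot collapse in $G*\mathbb F_{n'}$. Mixed-identity-freeness then evaluates the commutator to a nontrivial element of $G$. Finally, a commutator can only be nontrivial if each of its entries is nontrivial.

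Concretely, I proceed by induction on $m$; the case $m=1$ is exactly Definition \ref{MIF}. For $m=2$, let $w_1,w_2\in G*\mathbb F_n$ be nontrivial and adjoin one new free generator $y$, working in $G*\mathbb F_{n+1}$ with basis $x_1,\ldots,x_n,y$. Consider
\[ u=[w_1,\,y\,w_2\,y^{-1}]\in G*\mathbb F_{n+1}. \]
The key claim is that $u\neq 1$. Since $y$ does not occur in $w_1$ or $w_2$, the subgroup $\langle w_1, yw_2y^{-1}\rangle$ should be free of rank $2$, or at least non-abelian. One way to see this is via normal forms: $G*\mathbb F_{n+1}=(G*\mathbb F_n)*\langle y\rangle$, and in this free product the elements $w_1$ and $y w_2 y^{-1}$ lie in distinct conjugates of the factor $G*\mathbb F_n$. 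Elements of distinct conjugates of a free factor commute only if one of them is trivial, by the standard fact that centralizers of nontrivial elements of a free factor stay inside that factor. Hence $u\ne1$.

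By Definition \ref{MIF} there is a retraction $\varphi\colon G*\mathbb F_{n+1}\to G$ with $\varphi(u)\neq1$. If $\varphi(w_1)=1$ or $\varphi(w_2)=1$, then $\varphi(u)$ would be a commutator with a trivial entry, hence trivial. So both $\varphi(w_1)$ and $\varphi(w_2)$ are nontrivial. Setting $g_i=\varphi(x_i)$, we have $w_j(g_1,\ldots,g_n)=\varphi(w_j)$, because $\varphi$ restricts to the identity on $G$; this gives the claim for $m=2$.

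For general $m$, I iterate the construction. Put $v_1=w_1$, and $v_{k}=[v_{k-1},\,y_k w_k y_k^{-1}]$ with new free letters $y_2,\ldots,y_m$. Each $v_k$ is nontrivial by the same free-product argument, applied to $(G*\mathbb F_{n+k-1})*\langle y_k\rangle$. Moreover, any retraction killing some $w_j$ kills $v_m$, since the commutators are nested. Applying Definition \ref{MIF} to $v_m$ and restricting to the first $n$ variables then finishes the proof.

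The only step requiring care is the non-commutation claim, namely that $u\ne 1$. I would verify it by writing $w_1$ and $yw_2y^{-1}$ in reduced form relative to the splitting $A*\langle y\rangle$ with $A=G*\mathbb F_n$. The product $w_1\,yw_2y^{-1}\,w_1^{-1}\,yw_2^{-1}y^{-1}$ has no cancellation across the $y^{\pm1}$ letters, because $w_1$ and $w_2$ are nontrivial elements of $A$. It is therefore a reduced word of length greater than $0$, and hence nontrivial.
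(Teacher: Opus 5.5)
Your proposal is correct and follows essentially the same route as the paper: nest commutators $v_k=[v_{k-1},\,y_kw_ky_k^{-1}]$ with fresh free letters, verify $v_k\neq 1$ through the reduced normal form in $A*\langle y_k\rangle$, apply mixed-identity-freeness to $v_m$, and note that a retraction killing some $w_j$ kills the whole nested commutator. There are two harmless slips. First, the base factor at stage $k$ is $G*F(x_1,\ldots,x_n,y_2,\ldots,y_{k-1})\cong G*\mathbb F_{n+k-2}$, not $G*\mathbb F_{n+k-1}$. Second, $\langle w_1,yw_2y^{-1}\rangle$ need not be free of rank $2$ when $G$ has torsion, since it is the free product $\langle w_1\rangle*\langle yw_2y^{-1}\rangle$; neither affects the normal-form argument you actually rely on.
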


\begin{proof}
Introduce new free variables $y_2,\ldots,y_m$. Define recursively
$$
W_1=w_1\quad\text{ and }\quad W_j=[W_{j-1},y_j^{-1}w_jy_j],
$$
where $2\leq j\leq m$.

We claim that each $W_j$ is nontrivial. Suppose inductively that
$W_{j-1}\neq1$ and put
$$
H=G*F(t_1,\ldots,t_n,y_2,\ldots,y_{j-1}).
$$
Then $W_{j-1},w_j$ are nontrivial elements of $H$. In the free product $H*\langle y_j\rangle$, we have
$$
W_j
=
W_{j-1}^{-1}y_j^{-1}w_j^{-1}y_j
W_{j-1}y_j^{-1}w_jy_j.
$$
This is a reduced free-product normal form: all displayed $H$-syllables are
nontrivial and alternate with nontrivial powers of $y_j$. Hence
$W_j\neq1$.

Because $G$ is MIF, there are $g_1,\ldots,g_n,h_2,\ldots,h_m\in G$ such that
$$
W_m(g_1,\ldots,g_n,h_2,\ldots,h_m)\neq1.
$$
If $w_j(g_1,\ldots,g_n)=1$ for some $j$, then $W_j$ evaluates to $1$, and consequently, all subsequent words $W_k$ evaluate to $1$, including $W_m$. This is a contradiction. Therefore, every $w_j(g_1,\ldots,g_n)\neq1$.
\end{proof}

The following lemma is inspired by the so-called `big powers' lemmas \cite[Theorem 4]{So18} and \cite[Lemma 3.3]{AbDa19}. 

\begin{lem}\label{bigpowers}
    Let $G$ be MIF, let $M\subseteq G\setminus\{1\}$ be finite, and let
$m\geq2$. There exists $u\in G$ of order at least $4$ such that for every $g_1,\ldots,g_m\in M$,
$$
u^{e_0}g_1u^{e_1}g_2\cdots u^{e_{m-1}}g_mu^{e_m}\neq1
$$
whenever $e_0,\ldots,e_m\in\{\pm1,\pm2,\pm3\}$.
\end{lem}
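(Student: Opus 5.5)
The plan is to reduce the statement to a single application of the preceding lemma, which lets us avoid finitely many nontrivial mixed identities at once with one variable. We work in $G*\mathbb F_1 = G*\langle t\rangle$. For each tuple $(g_1,\ldots,g_m)\in M^m$ and each exponent vector $(e_0,\ldots,e_m)\in\{\pm1,\pm2,\pm3\}^{m+1}$ we form the word
$$
w_{g,e}=t^{e_0}g_1t^{e_1}g_2\cdots t^{e_{m-1}}g_mt^{e_m}\in G*\langle t\rangle .
$$
To force the order of $u$ to be at least $4$, we also add the words $t$, $t^2$, $t^3$. This is a finite family: there are $|M|^m\cdot 6^{m+1}+3$ words in total.

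The key step is to check that every word in the family is nontrivial in $G*\langle t\rangle$. The words $t,t^2,t^3$ are obviously nontrivial. For $w_{g,e}$, the displayed expression is already a reduced free-product normal form. Indeed, the $g_i$ are nontrivial elements of $G$, since $M\subseteq G\setminus\{1\}$, and each $t^{e_i}$ is a nontrivial element of $\langle t\rangle\cong\mathbb Z$, since $e_i\neq0$. The syllables alternate between the two factors, and the word begins and ends with a $t$-syllable. By the normal form theorem for free products, $w_{g,e}\neq1$. This holds even when $m\ge2$ and some consecutive $g_i$ are equal, because they are always separated by a nonzero power of $t$.

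Applying the preceding lemma with $n=1$ to this finite family yields a single $u\in G$ such that every word in the family evaluates to something nontrivial at $t=u$. From $u,u^2,u^3\neq1$ we get that $u$ has order at least $4$ (possibly infinite). From $w_{g,e}(u)\neq1$ for all choices we get exactly the desired non-vanishing of $u^{e_0}g_1u^{e_1}\cdots g_mu^{e_m}$.

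No step here is a real obstacle. The only point needing care is the reducedness claim: one has to verify that the exponents are never zero and the $M$-elements are never trivial, so that no cancellation can occur. If the intended statement covers all lengths up to $m$, the same argument applies by including the words for every length $k\le m$, which still gives a finite family.
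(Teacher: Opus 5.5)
Your proposal is correct and is essentially the paper's proof: the paper forms the same finite family $\{t,t^2,t^3\}\cup\{t^{e_0}g_1t^{e_1}\cdots g_mt^{e_m}\}$ in $G*\langle t\rangle$ and applies the preceding lemma with $n=1$. Your explicit normal-form check that each word is nontrivial (nonzero exponents, nontrivial $g_i$, alternating syllables) supplies the detail the paper leaves implicit.
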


\begin{proof}
Let $t$ generate an infinite cyclic group and consider $G*\langle t\rangle$. Form the finite collection of words
$$
\mathcal W=\{t,t^2, t^3\}\cup\left\{ t^{e_0}g_1t^{e_1}g_2\cdots t^{e_{m-1}}g_mt^{e_m}\right\},
$$
where the parameters range over the sets appearing in the statement. The result follows from an application of the previous lemma.
\end{proof}

\begin{proof}[Proof of Theorem \ref{mainthm}]
Let $M$ be a finite collection of non-trivial elements of $G\setminus\{1\}$, fix $m\geq2$, and let $u\in G$ be the group element obtained from applying Lemma \ref{bigpowers}.  

Let $g_1,\dots, g_m\in M$, and consider a word 
\[
w=(x_1^{-1}g_1x_1)(x_2^{-1}g_2x_2)\cdots (x_m^{-1}g_mx_m),
\]
where $x_i\in \{u,u^{2},u^{3}\}$ for each $i=1,\dots,m$. Then $w$ becomes
\[
w=u^{n_0}g_1u^{n_1}g_2\cdots u^{n_{m-1}}g_mu^{n_m},
\]
where $u^{n_0}=x_1^{-1}$, $u^{n_i}=x^ix_{i+1}^{-1}$ for $i=1,\dots, m-1$, and $u^{n_m}=x_m$.  By construction, $n_i\in \{0,\pm 1, \pm 2\}$ for each $i=1,\dots, m-1$.  If $w=1$, then by Lemma \ref{bigpowers}, we must have $n_i=0$ for some $i\in \{1,\dots, m-1\}$.  Thus $x_ix_{i+1}^{-1}=1$.  Furthermore, the elements $a=u$, $b=u^{2}$, and $c=u^{3}$ are distinct and so it follows that $G$ satisfies property $(*)$. The conclusion now follows from Theorem \ref{AN}. \end{proof}

We now proceed to prove our dynamical criterion. As we said in the introduction, similar criteria were proven by Ozawa \cite{Oz25} and Rybak \cite{Ry26}.

\begin{lem}\label{mainlemma}
Let $G$ be a group that admits a topologically free, extremely proximal action $G\curvearrowright X$. Then $G$ is MIF and contains a non-abelian free subgroup.
\end{lem}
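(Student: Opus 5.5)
The plan is a ping-pong argument. Extreme proximality will produce substitutions $t_j\mapsto h_j\in G$ with north--south dynamics, and topological freeness will separate the finitely many constants of a given word. I will first show that $X$ has no isolated points. If $\{p\}$ were open, extreme proximality applied to $U=\{p\}$ and $V$ a small open set would force $X\setminus\{p\}$ to be mapped into arbitrarily small open sets. Combining this with $\#X>2$ and the Hausdorff property gives a contradiction. It follows that a finite union of nowhere dense fixed-point sets is nowhere dense, hence has dense complement.

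\textbf{Basic step.} Let $A^-,A^+\subseteq X$ be disjoint nonempty open sets. Extreme proximality gives $h\in G$ with $h(X\setminus A^-)\subseteq A^+$. Taking complements yields $X\setminus A^+\subseteq h(A^-)$, that is, $h^{-1}(X\setminus A^+)\subseteq A^-$. Since $A^+\subseteq X\setminus A^-$, induction gives
\[
h^{e}(X\setminus A^-)\subseteq A^+ \quad\text{and}\quad h^{-e}(X\setminus A^+)\subseteq A^-
\]
for all $e\geq1$.

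\textbf{Free subgroup.} Take four pairwise disjoint nonempty open sets $A_1^\pm,A_2^\pm$. Four such sets exist because $X$ is Hausdorff and has infinitely many points. Let $h_1,h_2$ be as in the basic step. The ping-pong lemma then shows that $\langle h_1,h_2\rangle$ is free of rank $2$.

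\textbf{MIF.} Let $1\neq w\in G*\mathbb F_n$. After conjugating by an element of $G$, write $w$ in reduced form
\[
w=g_0t_{j_1}^{e_1}g_1t_{j_2}^{e_2}\cdots t_{j_k}^{e_k}g_k,
\]
where $e_i\neq0$ and $g_i\in G$. Reducedness means that $g_i\neq1$ whenever $j_i=j_{i+1}$, for $0<i<k$. If $k=0$, then $w=g_0\neq1$ and there is nothing to prove.

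Let $F$ be the finite set of all subwords $g_ag_{a+1}\cdots$ and their inverses, together with $1$. I will choose points $x_1^\pm,\dots,x_n^\pm$ and a base point $x$ inductively so that the points $fy$, for $f\in F$ and $y$ among the chosen points, are pairwise distinct whenever they are formally different. At each stage only finitely many points must be avoided, together with finitely many sets $\mathrm{Fix}(f^{-1}f')$ with $f\neq f'$ in $G$. These sets are nowhere dense by topological freeness, so such a choice is possible. By the Hausdorff property, I then shrink to small open neighbourhoods $A_j^\pm\ni x_j^\pm$ and $B\ni x$ such that all the translates $fA_j^\pm$ and $fB$ remain pairwise disjoint whenever their centers differ. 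In particular, $g_iA_j^{\pm}\cap A_{j}^{\mp\prime}=\emptyset$ and $g_iA_j^\pm\cap A_j^\pm=\emptyset$ whenever $g_i\neq1$.

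Now take $h_j$ as in the basic step for the pair $(A_j^-,A_j^+)$ and evaluate $w(h)$ on a point $y\in B$, reading the word from right to left. At each stage the current point lies in some $A_{j}^{\pm}$ or in $B$, translated by an element of $F$. Reducedness and the disjointness arrangement guarantee that the next letter $h_{j_i}^{e_i}$ is applied to a point outside its repelling set $A_{j_i}^{\mathrm{sign}(-e_i)}$. That letter therefore sends the point into the attracting set $A_{j_i}^{\mathrm{sign}(e_i)}$. In the end, $w(h)y\in g_0A^{\pm}_{j_1}$, which is disjoint from $B\ni y$. Hence $w(h)\neq1$.

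\textbf{Main obstacle.} I expect the hard part to be this bookkeeping. One must ensure that every intermediate constant, in particular the combination $g_i$ sandwiched between $t_j^{e}$ and $t_j^{\pm e'}$, moves the relevant attracting set off the repelling set of the next letter. One must also handle the possibility that $g_i=1$ between different variables. To keep this manageable, I would first treat the generic configuration and then note that every required disjointness amounts to a condition of the form $fy\neq f'y'$ with $f\neq f'$ or $y\neq y'$. Each such condition is ensured by choosing the points outside a finite union of nowhere dense sets.
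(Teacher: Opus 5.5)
Your argument is correct, but it is organized differently from the paper's.

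The paper proceeds in three moves:
\begin{itemize}
\item it cites Glasner for the free subgroup;
\item it uses the free subgroup (no identities) together with \cite[Remark 5.1]{HuOs16} to reduce MIF to one-variable mixed identities, and conjugates $w$ to $t^{n_1}g_1\cdots t^{n_k}g_k$ with all $g_i\neq1$;
\item it plays ping-pong with one point $x_0$ moved by every $g_i$, one neighbourhood $U$ with $U\cap g_iU=\emptyset$, and one element $u$ whose attracting and repelling sets lie in $U\setminus\{x_0\}$.
\end{itemize}

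You instead prove the free subgroup directly and treat all $n$ variables at once. You use a north--south pair $A_j^\pm$ for each variable and a separate base neighbourhood $B$. Since $B$ is kept disjoint from every translate that occurs, you need neither cyclic reduction nor the one-variable reduction. So your proof is self-contained, and the MIF conclusion does not depend on the free subgroup.

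The cost is the general-position choice of $2n+1$ points avoiding all $\mathrm{Fix}(f^{-1}f')$, which is more than necessary. You could instead do as the paper does: choose a single $U\ni x_0$ with $U\cap gU=\emptyset$ for every nontrivial constant $g$ of $w$, and put $2n$ pairwise disjoint open sets $A_j^\pm$ inside $U\setminus\{x_0\}$. That already yields every disjointness you use, because a nontrivial $g_i$ moves $U$ off itself, and an interior $g_i=1$ only sits between different variables. It also handles $g_0$ and $g_k$ without any conjugation.

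Two minor points:
\begin{itemize}
\item A finite union of nowhere dense sets is nowhere dense in any space. What the absence of isolated points actually gives you is that a nonempty open set stays nonempty after deleting finitely many points.
\item That absence of isolated points follows at once by taking $V=\{p\}$: the element $g$ would then inject the at least two points of $X\setminus\{p\}$ into $\{p\}$.
\end{itemize}
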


\begin{proof}
We can find a non-abelian free subgroup by arguing exactly as in the proof of \cite[Theorem 3.4]{Gl74}. Since $G$ contains a non-abelian free subgroup, it does not satisfy any identity, and so by \cite[Remark 5.1]{HuOs16}, $G$ is mixed-identity-free if and only if it does not satisfy a non-trivial mixed identity $ w \in G * \left\langle {t} \right\rangle $. We now show that this is indeed the case, assuming without loss of generality that $ w \notin \left\langle {t} \right\rangle $. Then, up to conjugation, we may decompose $w$ as
   $$
      w = t^{n_1}g_1 t^{n_2}g_2 \cdots t^{n_k}g_{k},
   $$
   where $ g_1, \dots, g_{k} \in G \setminus \{1\} $ and $ n_1, \dots, n_k \in \mathbbm{Z} \setminus \{0\} $.

   Since $G\curvearrowright X$ is topologically free, there is a point $x_0\in X$ such that $g_ix_0\not= x_0$ for all $1\leq i\leq k$. So, by appealing to Hausdorffness, we can also find open sets $V_0,\ldots,V_k$ such that $V_0\cap V_i=\emptyset$ and $x_0 \in V_0, g_ix_0 \in V_i$ for all $1\leq i\leq k$. We set
   \[
      U = V_0 \cap \bigcap_{i=1}^k g_i^{-1}V_i,
   \]
   and note that $x_0\in U$ and $U \cap g_iU = \emptyset$ for all $1\leq i\leq k$.
   
  Now, since  $U \setminus \{x_0\}$ is a non-empty\footnote{Note that the extremely proximal assumption forces $X$ to lack isolated points, hence $U$ is necessarily infinite.} open subset of $X$, there are disjoint open subsets $ U^{\pm} \subset U \setminus \{x_0\} $ and a group element $u\in G$ such that
    \[
      u(X \setminus U^-) \subset U^+\quad\text{ and } \quad u^{-1}(X \setminus U^+) \subset U^-.
   \]
   
   Finally, we check that $ w(u) $ cannot be trivial. Indeed, we have
   \begin{align*}
      w(u)x_0  &= (u^{n_1}g_1 u^{n_2}g_2 \cdots u^{n_k}g_{k})x_0 \\
      &\in (u^{n_1}g_1 u^{n_2}g_2 \cdots u^{n_k}g_{k})(U) \\
      &\subset (u^{n_1}g_1 u^{n_2}g_2 \cdots u^{n_k}) (X \setminus U) \\
		&\subset (u^{n_1}g_1 u^{n_2}g_2 \cdots u^{n_{k-1}}g_{k-1}) (U^+ \cup U^-) \\
        &\subset (u^{n_1}g_1 u^{n_2}g_2 \cdots u^{n_{k-1}}) (X\setminus U) \\
        &\vdots \\
		&\subset u^{n_1}g_1 (U) \subset U^+ \cup U^-.
   \end{align*}
   The above computation leads to the conclusion $w(u)x_0\not=x_0$, since $x_0 \notin U^+ \cup U^-$. This finishes the proof.
\end{proof}

\begin{proof}[Proof of Theorem \ref{mainthm2}]
    It follows immediately from combining Lemma \ref{mainlemma} and Theorem \ref{mainthm}.
\end{proof}

\section*{Acknowledgments}

The author gratefully acknowledges support from the Simons Foundation Dissertation Fellowship SFI-MPS-SDF-00015100. He also thanks Ben Hayes, Aaratrick Basu and Soham Chakraborty for all the interesting discussions surrounding the present topic.

\printbibliography

\bigskip
\bigskip
ADDRESS

\smallskip
\smallskip
Felipe I. Flores

Department of Mathematics, University of Virginia,

114 Kerchof Hall. 141 Cabell Dr,

Charlottesville, Virginia, United States

E-mail: hmy3tf@virginia.edu
\end{document}